\newcommand{\Z}{\mathbb{Z}}
\newcommand{\N}{\mathbb{N}}
\newcommand{\Vast}{\bBigg@{2.5}} 
\newcommand{\smallbullet}{} 
\DeclareRobustCommand\smallbullet{%
  \mathord{\mathpalette\smallbullet@{0.7}}
}
\newcommand{\smallbullet@}[2]{%
  \vcenter{\hbox{\scalebox{#2}{$\m@th#1\bullet$}}}%
}
\newtheoremstyle{thm}{}{}{\itshape}{}{\bfseries}{}{ }{} 
\newtheoremstyle{definition}{}{}{}{}{\bfseries}{}{ }{} 
\theoremstyle{thm}
\newtheorem{Theorem}{Theorem}[section]
\newtheorem{thm}[Theorem]{Theorem}
\newtheorem*{Theorem-ohne}{Theorem}
\theoremstyle{definition}
\newtheorem{rem}[Theorem]{Remark}
\definecolor{amaranth}{rgb}{0.9, 0.17, 0.31} 
\definecolor{carrotorange}{rgb}{0.93, 0.57, 0.13} 
\definecolor{citrine}{rgb}{0.89, 0.82, 0.04} 
\definecolor{dartmouthgreen}{rgb}{0.05, 0.5, 0.06} 
\definecolor{ballblue}{rgb}{0.13, 0.67, 0.8} 
\definecolor{ceruleanblue}{rgb}{0.16, 0.32, 0.75} 
\definecolor{amethyst}{rgb}{0.6, 0.4, 0.8} 
\definecolor{amber}{rgb}{1.0, 0.75, 0.0} 
\definecolor{burlywood}{rgb}{0.87, 0.72, 0.53} 
\numberwithin{equation}{section}
\begin{document}

\title{Knots not detected by any trace} 

\author{Kenneth L. Baker}
\address{Department of Mathematics, University of Miami, Coral Gables, FL 33146, USA}
\email{k.baker@math.miami.edu}

\author{Marc Kegel}
\address{Universidad de Sevilla, Dpto.\ de Álgebra,
Avda.\ Reina Mercedes s/n,
41012 Sevilla}
\email{mkegel@us.es, kegelmarc87@gmail.com}

\author{Kimihiko Motegi}
\address{Department of Mathematics, Nihon University, 3-25-40 Sakurajosui, Setagaya-ku, Tokyo 156--8550, Japan}
\email{motegi.kimihiko@nihon-u.ac.jp}



\begin{abstract}
The first and last named authors have demonstrated the existence of knots for which every integral slope is non-characterizing. 
In this short note, we extend this result in two ways. 
There exists a knot that shares for every integer $n$ the same $n$-trace with infinitely many mutually distinct knots. 
Moreover, every knot is concordant to a knot that is not detected by any of its traces. 
\end{abstract}

\keywords{Knot traces, integer surgeries, characterizing slopes, RBG links} 

\makeatletter
\@namedef{subjclassname@2020}{%
  \textup{2020} Mathematics Subject Classification}
\makeatother

\subjclass[2020]{57R65; 57K10, 57K32} 


\maketitle

\section{Introduction}
\label{intro}
The $n$-\textit{trace} $X_n(K)$ of a knot $K$ is the oriented, smooth $4$-manifold obtained by attaching a $2$-handle to the $4$--ball $D^4$ attached along $K$ with framing $n$. We say that $K$ is \textit{detected} by its $n$-trace, if whenever $X_n(K)$ is orientation-preservingly diffeomorphic to $X_n(K')$ for another knot $K'$, then $K'$ is isotopic to $K$.

Question 1.9 from~\cite{Baldwin_Sivek_L_spaces} asks 
if every knot is detected by at least one of its traces. 
In this short note, we answer this question in the negative, 
demonstrating that this is quite far from the case. 

\begin{thm}
\label{thm:main}
There exists a knot $K$ such that for every integer $n\in\Z$, there exist an infinite family of mutually distinct knots $(K^k_n)_{k\in \N}$ such that each $K^k_n$ is distinct from $K$ but its $n$-trace $X_{n}(K^k_n)$ is orientation-preservingly diffeomorphic to $X_n(K)$.
\end{thm}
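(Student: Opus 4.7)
The plan is to leverage the RBG link machinery used by the first and last authors in their prior work (which handled one framing at a time) and modify it to produce a single knot that admits trace-preserving perturbations at every integer framing simultaneously. Recall the general RBG setup: one starts with a framed three-component link $L = R \cup B \cup G$ in $S^3$ in which surgery on $R$ yields $S^3$ again, and the images of $B$ and $G$ become dual surgery descriptions of the same $4$-manifold. Slam-dunking $B$ (resp.\ $G$) against $R$ produces knots $K_B$ and $K_G$ whose $n$-traces (for the induced framings) agree. To obtain infinite families, one introduces a twist region and shows that the twisted knots $K_B^k$ remain mutually distinct while continuing to share $X_n(K_G)$.

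The first step is to produce, for each $n\in\Z$, a parametric RBG construction witnessing an infinite family of knots sharing the $n$-trace with some base knot, using the techniques of the earlier Baker--Motegi paper. The main new input is to arrange these constructions around a \emph{single} base knot $K$. I would do this by choosing $K$ to contain a sufficiently rich tangle $T \subset K$ (for instance, a pattern in a solid torus admitting a null-homologous annulus-twist region, together with auxiliary unknotted components once completed to an RBG link) such that for every integer $n$ one can designate an RBG completion of $T$ whose green-slam-dunk recovers $K$ and whose blue-slam-dunk, after a $k$-fold twist in the associated twist region, gives a knot $K_n^k$. The flexibility in choosing the RBG completion for each $n$ separately is what permits the single knot $K$ to witness the phenomenon at every framing.

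Having produced, for each $n$, a candidate family $(K_n^k)_{k\in\N}$ with $X_n(K_n^k) \cong X_n(K)$, the remaining step is to verify that each such family contains infinitely many distinct knots, and that each is distinct from $K$. Here one can appeal to classical invariants depending on the construction: if twisting is null-homologous the Alexander polynomial is preserved, so one instead uses hyperbolic volume (shown to tend to that of a cusped limit manifold as $|k| \to \infty$ by Thurston's hyperbolic Dehn surgery theorem), genus, or the $\tau$-invariant, any of which takes infinitely many values on the twist family for a generic choice of twisting disk.

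The principal obstacle will be Step~2: fabricating one knot $K$ rich enough that a trace-preserving twist region exists at every integer framing. The previous result guaranteed such a twist region only for a \emph{fixed} $n$, and the RBG completions used there depend on $n$ in a way that generally forces $K$ to change with $n$. Overcoming this requires either a universal RBG template flexible enough to absorb arbitrary framing changes, or a careful stacking of countably many independent twist regions inside a single knot $K$—with the subtlety that one must rule out unwanted interactions between twisting at different regions when comparing the resulting knots to $K$. Controlling these interactions, and ensuring the resulting $K^k_n$ are genuinely distinct from $K$ (not merely from each other), is the technical heart of the argument.
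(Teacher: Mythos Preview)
Your proposal has a genuine gap: you correctly isolate the central difficulty---arranging a \emph{single} knot $K$ that admits a trace-preserving twist family at every framing $n$---but you do not resolve it. The two mechanisms you suggest (a ``universal RBG template'' absorbing arbitrary framing shifts, or stacking countably many independent twist regions inside one knot) are speculative, and you yourself flag the second as fraught with interaction problems you do not address. As written, the argument stops exactly where the content begins.

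The paper's route avoids this obstacle entirely by using different machinery. First, the $(*n)$-move on a \emph{simple annulus presentation} (from Abe--Jong--Deruelle--Luecke--Omae) already produces, from a \emph{single} presentation of $K$, infinitely many knots $K_n^k$ with $X_n(K_n^k)\cong X_n(K)$ for \emph{every} integer $n$; the framing-dependence you worry about does not arise at the level of the trace diffeomorphism. The only issue is \emph{distinguishing} the $K_n^k$: the Alexander-polynomial argument for a ``good'' presentation on a positive Hopf band works only for $n>0$. The paper's key trick is then to take $K$ \emph{amphicheiral} (specifically $8_9$): if $K\cong\overline{K}$ shares its $n$-trace with infinitely many distinct knots, mirroring shows it shares its $(-n)$-trace with infinitely many as well, so one sign of $n$ suffices. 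The case $n=0$ is handled separately by hyperbolic volume, much as you suggest. None of this requires building $n$-dependent RBG completions around a fixed $K$.

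Note also that the RBG construction you invoke is what the paper uses for its \emph{other} theorem (the concordance result), and even there it yields only \emph{one} partner knot per framing, not an infinite family; pushing it to prove the present statement would require additional work you have not outlined.
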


The knots used to prove Theorem~\ref{thm:main} admit annulus presentations that induce ribbon immersions of once-punctured tori. Hence these knots have $4$-genus at most one.
In contrast, our next result demonstrates the existence of knots 
with arbitrarily large $4$-genus that are not detected by any of their traces.

\begin{thm}
\label{thm:concordance}
For any knot $J$, there is a knot $J'$ concordant to $J$ that is not detected by any of its traces. Furthermore, we have infinitely many distinct knots $J'$ satisfying this property.
\end{thm}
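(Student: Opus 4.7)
The plan is to extract from Theorem~\ref{thm:main} a slice model and then connect-sum with $J$ to land in the prescribed concordance class.

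First, I would aim to produce a slice knot $S$ that enjoys the non-detection property of the knot $K$ of Theorem~\ref{thm:main}. Since $K$ has $4$-genus at most $1$ but is not obviously slice, I would try to modify the construction so that the underlying surface bounded by the output knot is a ribbon disk rather than a once-punctured torus, keeping the RBG-link / annulus-presentation surgery moves intact, and producing a slice $S$ with the same property. If this direct modification is not available, a fallback is $S := K \# (-K)$, which is slice by symmetry, and in which the RBG moves of Theorem~\ref{thm:main} can be performed inside either summand to recover the required trace-equivalences.

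Next, set $J' := J \# S$, which is concordant to $J$. To check non-detection, fix $n\in\Z$ and take the family $(S^k_n)_{k\in\N}$ produced by Theorem~\ref{thm:main}, with $X_n(S) \cong X_n(S^k_n)$. These trace-equivalences are realized by handle-calculus moves (RBG surgeries) supported in a bounded region of the surgery diagram of $S$. By routing the connect-sum band with $J$ into the complement of this region, the same moves transplant to $J \# S$ inside the $S$-summand, yielding knots $J \# S^k_n$ and induced diffeomorphisms $X_n(J \# S) \cong X_n(J \# S^k_n)$. Distinctness of the knots $J \# S^k_n$ (and from $J \# S$) follows from the multiplicativity of the Alexander polynomial under connect sum, since the $\Delta_{S^k_n}(t)$ are already distinct. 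To obtain infinitely many \emph{distinct} concordant representatives $J'$, one can instead vary $m$ in $J' := J \# S^{\# m}$; these all lie in the concordance class of $J$, each inherits the non-detection property by iterating the transplantation in the outermost summand, and they are pairwise distinguished by the Alexander polynomial.

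The main obstacle is the transplantation step. The trace $X_n(J \# S)$ is not simply the boundary connect sum $X_n(J) \natural X_n(S)$: a handle-slide along the connect-sum band contributes a $1$-handle/$3$-handle correction that couples the two summands. One must therefore verify that the surgery realizing $X_n(S) \cong X_n(S^k_n)$ can be arranged to fix (or be disjoint from) the meridian of the connect-sum band, so that the local diffeomorphism extends across the band and descends to a diffeomorphism of $X_n(J \# S)$. This is plausible because the RBG moves of Theorem~\ref{thm:main} are supported in a $4$-ball that one may place freely in the handle diagram, but making this compatibility rigorous is the essential technical step of the proof.
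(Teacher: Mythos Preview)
Your overall strategy---form $J' = K \# (-\overline{K}) \# J$ so that $J'$ is concordant to $J$, and then exploit trace-equivalences coming from the $K$ side---is exactly what the paper does. The difference is in \emph{which} input you feed into this scheme and how the ``transplantation'' is handled.

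The paper does not use Theorem~\ref{thm:main} here at all. Instead it proves and invokes Theorem~\ref{thm:non-prime}: there is a knot $K$ (e.g.\ $9_{42}$) such that for \emph{every} non-trivial $K'$, the connected sum $K\# K'$ is not detected by any trace. This is established via an RBG link in which $K'$ appears literally as a free $1$-string tangle $T$ sitting outside the region where all the handle slides occur; the Kirby moves producing the trace-equivalence never touch $T$, so the ``transplantation'' you worry about is built into the construction from the start. One then simply takes $K' = (-\overline{K})\# J$ (replacing $J$ by a non-trivial ribbon knot if $J$ is trivial) and is done. Infinitely many distinct $J'$ come from the infinitely many distinct $K$ supplied by Theorem~\ref{thm:non-prime}, not from iterated self-sums.

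Your route, by contrast, tries to push the $(*n)$/annulus-twist moves of Theorem~\ref{thm:main} through a connected sum. You are right that this is the crux, and right that it is not automatic: the $(*n)$ move globally reorganizes the band and the Hopf band, and it is not evident that an arc of $K$ can be kept fixed throughout so that a summand can ride along. You would effectively need to reprove a version of Theorem~\ref{thm:non-prime} for the $8_9$ construction. A smaller issue: your distinctness argument via Alexander polynomials fails at $n=0$, since knots sharing a $0$-surgery share $\Delta$; you would need the hyperbolic-volume argument there (though for mere non-detection you only need one partner different from $J'$, which follows from cancellation in the knot monoid). In short, your plan is sound in outline but leaves the hard step open; the paper sidesteps it by choosing a construction in which the extra summand is a free parameter from the outset.
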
 

\begin{rem}
    Actually, our proof shows that $J'$ is ribbon concordant to $J$.
\end{rem}

\section{Knots having, for all integers \texorpdfstring{$n$}{n}, the same \texorpdfstring{$n$}{n}-trace as infinitely many other knots.}

Let $A\cup D$ be a closed disk embedded in $S^3$ where $D$ is a disk embedded in its interior and $A$ is the annulus that is the complement of the interior of $D$.  Let $c$ be an unknotted circle with framing $-1$ that is disjoint from $A$ and transversally intersects $D$ once.

Take an embedded  band $b$ connecting the two components of $\partial A$ which intersects $\mathrm{int}\,A$ in arcs and does not intersect $c$ as shown in Figure~\ref{fig:annulus_presentation}.
Then banding $\partial A$ together along $b$ yields a knot $K$
which has an {\em annulus presentation} $(A, b, c)$. 
We call an annulus presentation {\em simple} if $b \cap D = \emptyset$; Figure~\ref{fig:annulus_presentation} gives a simple annulus presentation.

\begin{figure}
    \centering
    \includegraphics[width=0.4\linewidth]{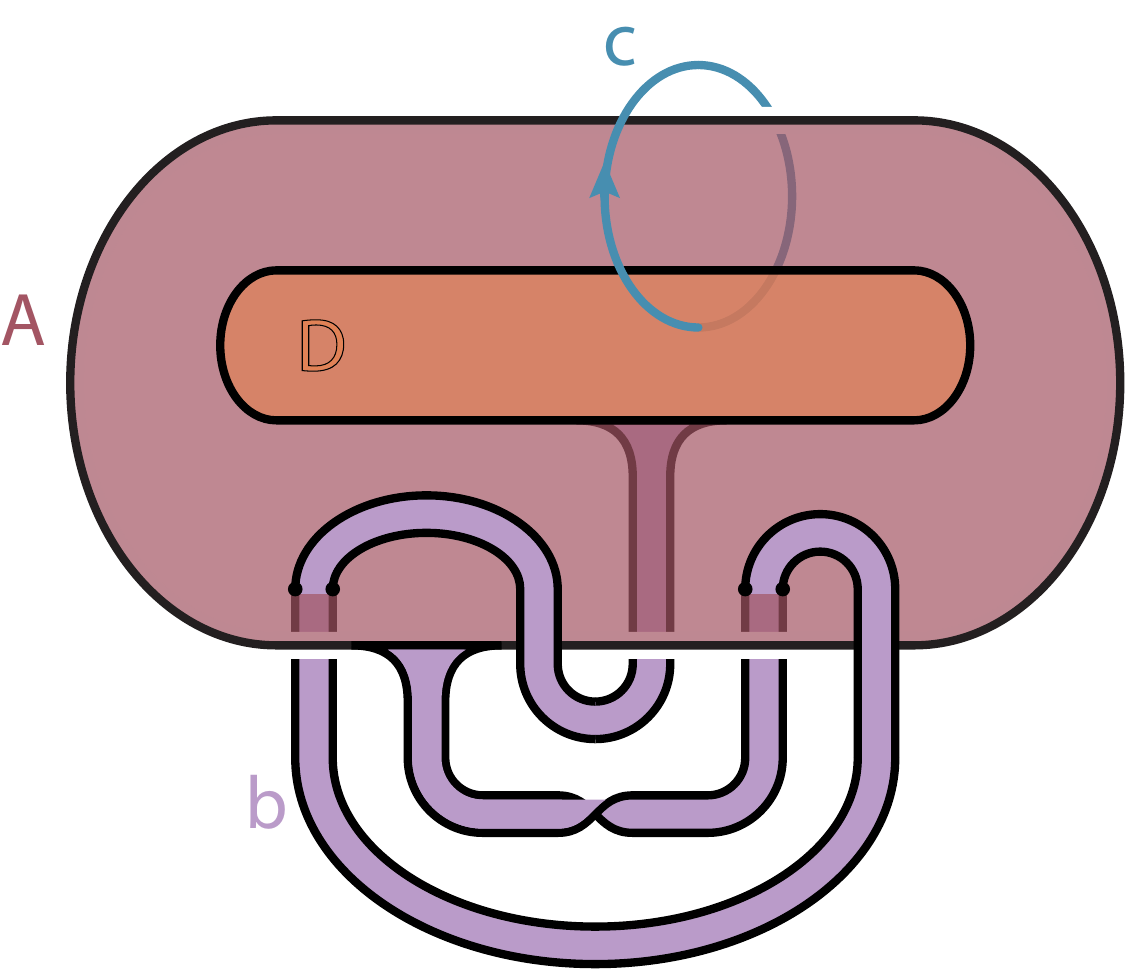}
    \caption{A simple annulus presentation.}
    \label{fig:annulus_presentation}
\end{figure}

\begin{proof}[Proof of Theorem~\ref{thm:main}]
This essentially follows from \cite{AJDLO_same_traces} by finding a knot that has two `simple annulus presentations' on Hopf bands of opposite handedness, that are both `good'.   
By \cite[Theorem 3.10]{AJDLO_same_traces}, a simple annulus presentation for a knot $K$ enables the construction of infinitely many knots with the same $n$-trace as $K$ for any integer $n$. 
By \cite[Theorem 3.13]{AJDLO_same_traces}, when a simple annulus presentation on a Hopf band with a positive twist is `good' and $n$ is positive, these infinitely many knots are distinguished by the degrees of their Alexander polynomials. 
What it means for an annulus presentation to be `good' is rather technical,
so we refer \cite[Definition 3.14]{AJDLO_same_traces} for the precise definition.
However, we illustrate the idea in Figure~\ref{fig:GAP8_9} and its caption.

Then by mirroring, a good annulus presentation on a Hopf band with a negative twist yields infinitely many distinct knots with the same $n$-trace as $\overline{K}$ for each negative integer $n$, where $\overline{K}$ denotes the mirror image of $K$.

This observation leads us to consider an amphicheiral knot $K$ with a good annulus presentation. 
If the $n$-trace $X_n(K)$ of an amphicheiral knot $K$ is orientation-preservingly diffeomorphic to the $n$-trace $X_n(K')$ of another knot $K'$, then it follows that $X_{-n}(K)\cong X_{-n}(\overline{K})\cong X_{-n}(\overline{K'})$. In particular, we deduce that an amphicheiral knot $K$ shares its $n$-trace with infinitely many distinct knots if and only if it shares its $(-n)$-trace with infinitely many distinct knots.

Let us consider the case where $n = 0$. 
If two knots share the same $0$-surgery, their Alexander polynomials agree~\cite{Rolfsen}. 
Nevertheless, for $n=0$, these infinitely many knots $K^k_0$ admit a surgery description on a single 3-component link $L$, see for example~\cite{Osoinach_annulus}. 
If $L$ is hyperbolic, then infinitely many of the knots $K^k_0$ with the same $0$-trace as $K$ will be hyperbolic and the volumes of $K^k_0$ converge strictly monotonically to the volume of $L$~\cite{volumes}. 
In particular, infinitely many of the $K_0^k$ can be distinguished by their hyperbolic volume~\cite{Osoinach_annulus}. 

Now let us exhibit an amphicheiral knot with good annulus presentations for which the associated link $L$ is hyperbolic, thereby proving the theorem.

The left column of Figure~\ref{fig:GAP8_9} shows a simple annulus presentation 
for the amphicheiral unknotting number 1 knot $8_9$, 
as obtained from \cite[Figure 9]{Abe_Tagami}. 

The first frame shows the annulus presentation with the push-offs of the two boundary components of the Hopf band, and the second shows it with an unknotting circle. The third picture shows, after the unknotting twist as in Figure~\ref{fig:annulus_presentation}, the now unknotted knot as a flat loop bounding an obvious disk with the signed intersections (for some choice of orientations) of the stretched unknotting circle resulting from pulling $\partial D$ along $b$ to the other component of $\partial A$. From those intersections, the last frame identifies the $(+-)$ and $(-+)$ arcs of the stretched unknotting circle which, along with the remaining arcs being only $(++)$ or $(--)$, certify these simple annulus presentations as `good'.
Finally, the verified functions in SnapPy~\cite{SnapPy} identify the complement of the 3-component link on the left as a hyperbolic manifold.
\end{proof}

\begin{figure}
    \centering
    \includegraphics[width=0.7\linewidth]{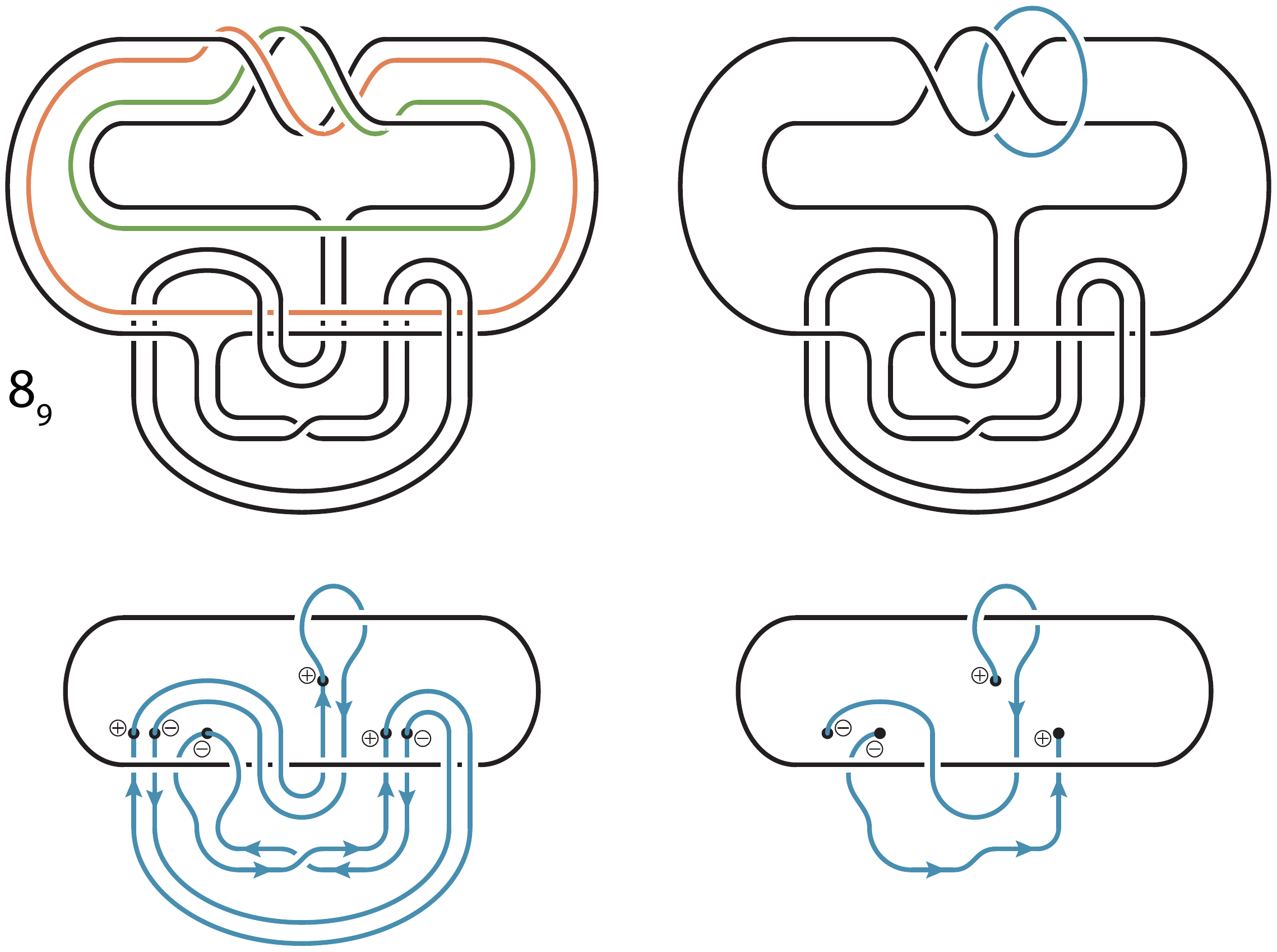}
    \caption{In the first frame, the knot $8_9$ is shown in a good annulus presentation along with push-offs of a positive Hopf band.  The second frame shows it with an unknotting circle. The third shows, after the unknotting twist (as in Figure~\ref{fig:annulus_presentation}), the now unknotted knot and the stretched unknotting circle. The last frame shows the $(+-)$ and $(-+)$ arcs that certify this annulus presentation as `good'.  }
    \label{fig:GAP8_9}
\end{figure}

\begin{rem}
    The knot $8_{17}$ with its simple annulus presentation given in \cite[Figure 9]{Abe_Tagami} also works.
\end{rem}

\begin{rem}
    Using an amphicheiral knot simplified the proof above. Alternatively, one could begin to build other examples by constructing a knot $K$ with unknotting crossings of opposite sign to produce two simple annulus presentations on Hopf bands of opposite sign.  
    Figure~\ref{fig:9_44} shows a presentation of the knot $9_{44}$ that indicates such a family of Montesinos knots.  
    As above, \cite[Theorem 3.13]{AJDLO_same_traces} will then yield, for each integer $n$, infinitely many knots with the same $n$-trace as $K$. To distinguish these infinitely many knots when $n\neq 0$, one may extend the definition of `good', \cite[Definition 3.14]{AJDLO_same_traces}, so that the $(+-)$ arc and $(-+)$ arc are lifted to have linking number $k \neq 0$ rel $D \times \{i, i+1\}$ (instead of just $k=\pm1$). This will still enable, almost verbatim, the calculation of the degree of the Alexander polynomial in \cite[Lemma 3.17]{AJDLO_same_traces} by which \cite[Theorem 3.13]{AJDLO_same_traces} distinguishes these infinite families of knots. The approach for $n=0$ remains the same.
\end{rem}

\begin{figure}
    \centering
    \includegraphics[width=0.2\linewidth]{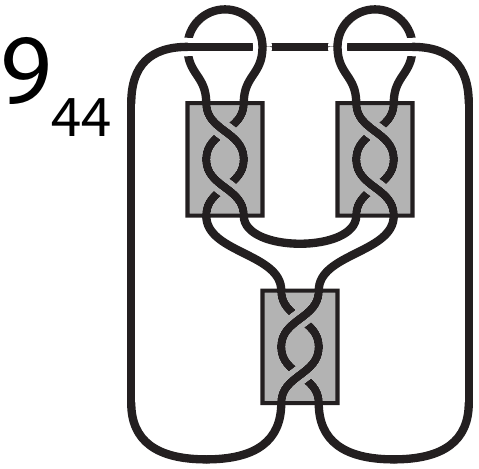}
    \caption{The knot $9_{44}$ is shown as a Montesinos knot. Its diagram suggests an infinite family of Montesinos knots that can be unknotted by a single positive crossing and by a single negative crossing.}
    \label{fig:9_44}
\end{figure}

\begin{rem}
In \cite[Definition 3.2]{AJDLO_same_traces} the $(*n)$ move is introduced, which is roughly a combination of an annulus twisting and $n$--twisting along some disk. It changes a knot $K$ to $K'$ whose $n$--traces are the same; see \cite[Theorem~3.7]{AJDLO_same_traces}.
\cite[Theorem 3.10]{AJDLO_same_traces} uses, for each integer $n$, repeated applications of the $(*n)$ move on a simple annulus presentation of a knot $K$ to produce infinitely many knots $K_n^k$ with the same $n$-trace as the knot $K$. See \cite{AJDLO_same_traces} for the specifics of the $(*n)$ move.  However, if the presentation is on a Hopf band with a right-handed twist (as in Figure~\ref{fig:GAP8_9}), these knots will all be isotopic when $n=-2$. This may be confirmed by surgery calculus on a surgery diagram of the knot $K_{-2}^1$ resulting from the $(*-2)$ move on the simple annulus presentation of $K=K_{-2}^0$. We do this in Figure~\ref{fig:n_minus2_general} with a description in the paragraph below. Consequently all the knots $K_{-2}^k$ for $k\in \Z$ obtained by repeated $(*-2)$ moves (and the inverse moves) are isotopic to $K$. Similarly, if the presentation is on a Hopf band with a left-handed twist, these knots will all be isotopic when $n=2$.

In the top row of Figure~\ref{fig:n_minus2_general}, (ii) shows a simple annulus presentation of a knot $K$, suppressing the specifics of the banding of the two boundary components of the Hopf band.  Note that for a simple annulus presentation, the banding is assumed to not intersect the interior of the inner region of this diagram. In (i), the twist of the Hopf band has been blown up, giving a surgery diagram of $K=K^0_{-2}$. From (ii) through (iii) to (iv) shows an isotopy of $K$. The isotopy from (iii) to (iv) slides the clasp along the band.  In the bottom row of Figure~\ref{fig:n_minus2_general}, (v) gives a surgery diagram of of $K^1_{-2}$, the result of applying the $(*-2)$ move on $K^0_{-2}$. To obtain (vi), we perform handle slides of the black knot over the red and the green knots as indicated, and then blow down the blue knot. Next we slide the black knot over the orange knot and then blow down the green knot to get (vii).  Finally we slide the black knot over the red knot and then cancel the red and the orange knots to yield (viii) which is equivalent to $K$ in (vi) above it.
\end{rem}

\begin{figure}
\centering
\includegraphics[width=0.99\textwidth]{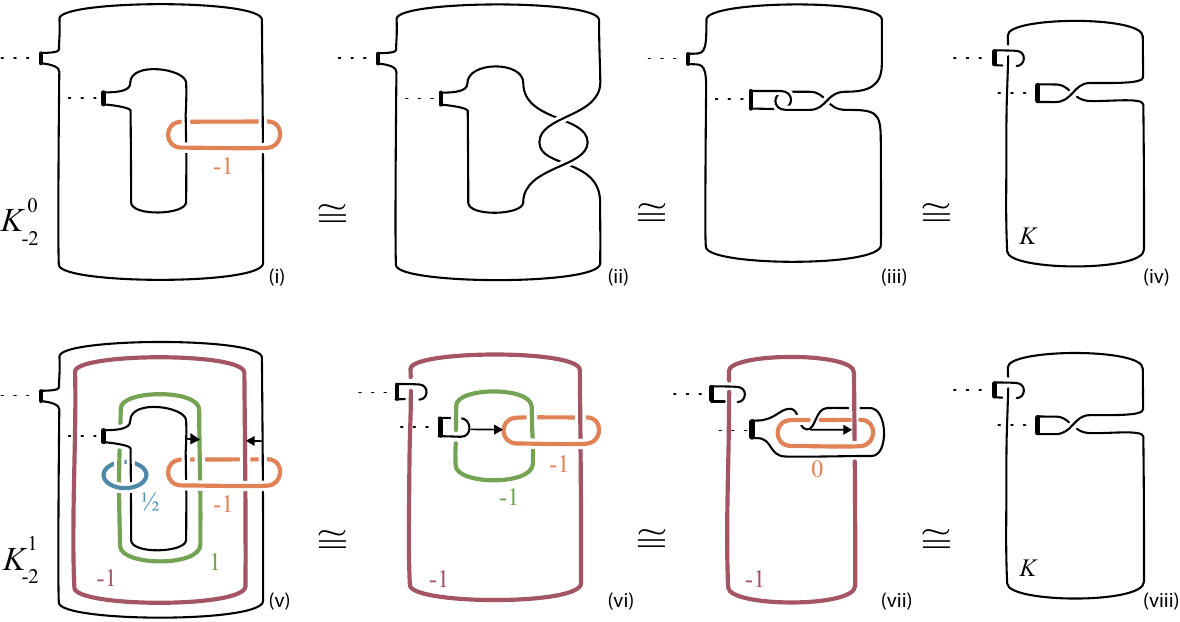}
\caption{
The knots $K^m_{-2}$ are all isotopic to $K$.} 
\label{fig:n_minus2_general}
\end{figure}

\section{Every concordance class contains a knot that is not trace-detected}
\label{sec:surgery_dual}

In this section, we prove Theorem~\ref{thm:concordance}, 
which we deduce from Theorem~\ref{thm:non-prime} below. 
The latter theorem, follows by reinterpreting a $3$-dimensional result from~\cite{Baker_Motegi_non_char} as a $4$-dimensional construction. 
Tagami~\cite[Remark 4.6]{Tagami_annulus} also described the methods in~\cite{Baker_Motegi_non_char} by using dualizable patterns \cite{dualizable} and RGB link constructions~\cite{RGB}. 

\begin{thm}
\label{thm:non-prime}
There exists a knot $K$ such that for any given non-trivial knot  $K'$,
$K \# K'$ is not detected by any of its traces. 
Furthermore, we have infinitely many distinct knots $K$ satisfying this property. 
\end{thm}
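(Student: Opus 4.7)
The plan is to upgrade the $3$-dimensional non-characterizing slopes result of~\cite{Baker_Motegi_non_char} to a $4$-dimensional statement about traces, following the strategy indicated by Tagami~\cite[Remark 4.6]{Tagami_annulus}. First I would recall that~\cite{Baker_Motegi_non_char} produces knots $K$ with the property that for every non-trivial knot $K'$ and every integer $n$, there is an infinite family of mutually distinct knots all sharing the same $n$-surgery with $K \# K'$. Their construction is local and uniform: the surgery-equivalent knots are produced from $K \# K'$ by repeated twistings along an auxiliary unknotted link $L$ disjoint from $K \# K'$, reflecting a pattern/companion decomposition where $K \# K'$ appears as the image of a fixed pattern in a solid torus applied to some companion (or, equivalently, built from an annulus-twist / seiferter configuration).

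The crucial step is to reinterpret this auxiliary link $L$ as an RGB link in the sense of~\cite{RGB}, or equivalently as defining a dualizable pattern in the sense of~\cite{dualizable}. This is precisely the content of~\cite[Remark 4.6]{Tagami_annulus}: the twisting move used in~\cite{Baker_Motegi_non_char} is not just a $3$-dimensional move preserving surgeries, but a genuinely $4$-dimensional move preserving traces. Concretely, the RGB-link/dualizable-pattern description exhibits an explicit sequence of Kirby moves (handle slides and cancellations) realizing an orientation-preserving diffeomorphism between the $n$-traces of the two knots, not merely a homeomorphism between the $n$-surgery $3$-manifolds. Applying this to $K \# K'$ immediately yields, for each $n \in \Z$, an infinite family of knots with the same $n$-trace as $K \# K'$, so $K \# K'$ is not detected by any of its traces.

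For the second assertion, I would observe that the construction of~\cite{Baker_Motegi_non_char} depends on twisting/framing parameters which can be varied to produce infinitely many mutually distinct candidate knots $K$; distinctness is verified using classical invariants (for instance, Alexander polynomials, genera, or hyperbolic invariants of an auxiliary exterior), exactly as in the original paper. The main obstacle I foresee is the bookkeeping in the second step: one must verify that the specific link $L$ from~\cite{Baker_Motegi_non_char} really does fit into the RGB/dualizable-pattern framework and then exhibit the handle-slide sequence producing a trace diffeomorphism rather than just a surgery homeomorphism. Once that translation is pinned down, the theorem follows formally by plugging $K \# K'$ into the resulting $4$-dimensional construction and allowing $K'$ and $n$ to range freely.
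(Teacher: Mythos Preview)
Your proposal is correct and follows essentially the same route as the paper: reinterpret the Baker--Motegi construction as an RBG link so that the $3$-dimensional surgery equivalence upgrades to a trace diffeomorphism via explicit handle slides and cancellations, then vary the framing parameter $m$ and distinguish the resulting knots $K_m$ by their Alexander polynomials. One small correction: the construction actually produces, for each integer slope, a \emph{single} companion knot $K_G^n$ sharing that trace with $K_m\# K'$ (not an infinite twist-family per slope as you describe), but one other knot per slope is all that is needed for non-detection.
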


\begin{figure}[htbp]
\centering
\includegraphics[width=0.7\textwidth]{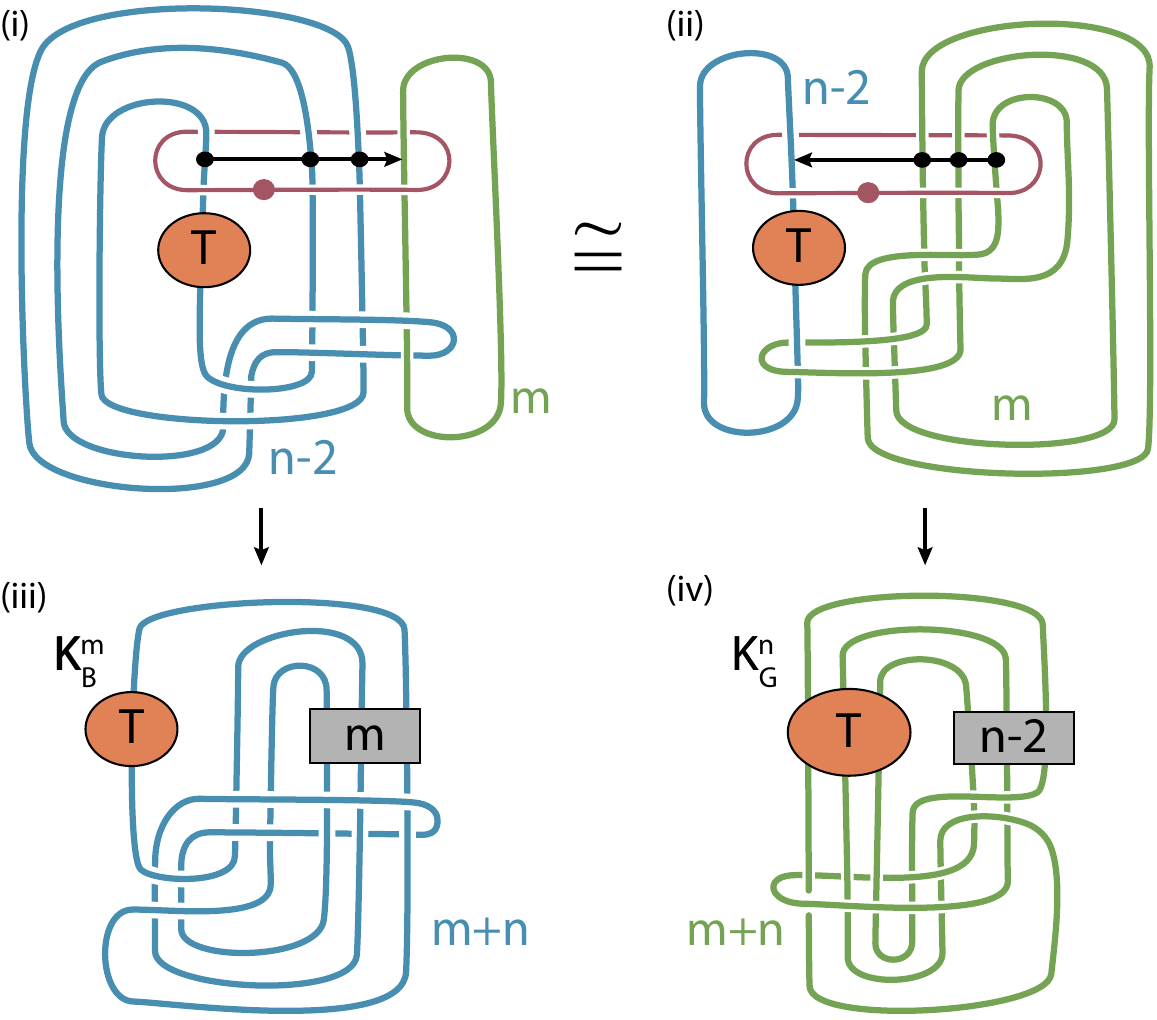} 
\caption{$K_B^m$ and $K_G^n$ share the same $(n+m)$-trace.  Grey boxes indicate number of full twists.
\label{RBG_BM_2}
}
\end{figure}

\begin{proof}
Figure~\ref{RBG_BM_2} (i) and (ii) show two isotopic Kirby diagrams with a $3$-component link $L = R \cup B \cup G$, 
where the shaded disk $T$ denotes a $1$-string tangle whose closure is the knot $K'$. 
In these diagrams, the blue knot $B$ has framing $n-2$ and the green knot $G$ has framing $m$.
The dotted red knot $R$, representing the $1$-handle, is a meridian of the blue knot $B$ and also of the green knot $G$, as visible in the two different presentations of $L$. 
Thus we can slide $B$ over $G$ three times successively along the dotted arrow and then cancel $G$ and $R$ to obtain the $(m+n)$-trace of a knot $K_B^m$, as shown in Figure~\ref{RBG_BM_2} (iii). 
By symmetry we can also slide $G$ over $B$ three times successively along the dotted arrow and then cancel $B$ and $R$ to obtain the $(m+n)$-trace of the knot $K_G^n$, as shown in Figure~\ref{RBG_BM_2} (iv). 
Since these two traces are also represented by isotopic Kirby diagrams, the knots $K_B^m$ and $K_G^n$ have the same $(m+n)$-trace. 

In Figure~\ref{RBG_BM_2} (iii) we see that the knot $K_B^m$ is the connected sum of $K'$ with a knot denoted by $K_m$. 
In~\cite[Example~4.5]{Baker_Motegi_non_char} it is shown that for every pair of integers $n$ and $m$ the knots $K_B^m$ and $K_G^n$ are non-isotopic. It follows that all knots $K_B^m=K_m\#K'$ are not detected by any trace. For concreteness we mention that $K_{0}=9_{42}$.
Furthermore, the Alexander polynomials of the knots $K_{m-1}$ computed in~\cite[Equation~$(\star)$]{Baker_Motegi_non_char} show that for $m\geq -1$ the knots $K_m$ are mutually distinct and thus we have infinitely many knots with the desired property. Note that the notation in~\cite{Baker_Motegi_non_char} differs from the notation used here, see Table~\ref{tab:comparison} for a comparison between these notions. 
\end{proof}

\begin{table}[htbp]
\caption{Translation of the notations used here and in~\cite{Baker_Motegi_non_char}.}
\begin{tabular}{lcccc} 
\toprule
Theorem~\ref{thm:non-prime} & $K_B^m$&$K_G^n$&$K_m$&$K'$\\
\cite[Example 4.5]{Baker_Motegi_non_char}&$k_{m+1}$&$K_{n-1}$&$k'_{m+1}$&$k''$\\
 \bottomrule
\end{tabular}
\label{tab:comparison}
\end{table}

\begin{rem}\hfill
\begin{enumerate}
    \item In Example 4.5 in~\cite{Baker_Motegi_non_char} it is also shown that $K_G^0$ is a prime satellite knot. Thus the proof of Theorem~\ref{thm:non-prime} also demonstrates the existence of a prime satellite knot that is not detected by any of its traces.
    \item The assumption in Theorem~\ref{thm:non-prime} that $K'$ is non-trivial is necessary for the construction used in this proof. Indeed, if $K'$ is the unknot, then $K_B^0$ and $K_G^0$ are isotopic. 
    On the other hand, if we start with the $3$-component link $L$ shown in Figure~\ref{fig:BMfig7-localknotRGB} we get, as in the proof of Theorem~\ref{thm:non-prime}, knots $K_B^m=K_m\#K'$ and $K_G^n$ that share the same $(n+m)$-trace. In this example, it turns out that for all pairs of integers $n$ and $m$, the knots $K_B^m$ and $K_G^n$ are non-isotopic. To distinguish $K_B^m$ and $K_G^n$ (also in the case when $K'$ is trivial) one can demonstrate that their HOMFLYPT polynomials are different. But since this calculation is lengthy and we do not need it to deduce Theorem~\ref{thm:concordance}, we do not add this here.
    \item We also get infinitely many mutually different hyperbolic knots that are not detected by any trace. This follows by combining~\cite{Tagami_annulus} and~\cite{Baker_Motegi_non_char}. Alternatively, this can be deduced from Figure~\ref{fig:9_44} as in the proof of Theorem~\ref{thm:main} or by putting a trivial tangle $T$ in Figure~\ref{fig:BMfig7-localknotRGB} and arguing as above.
\end{enumerate}
\end{rem}

\begin{figure}[htbp]
\centering
\includegraphics[width=0.6\textwidth]{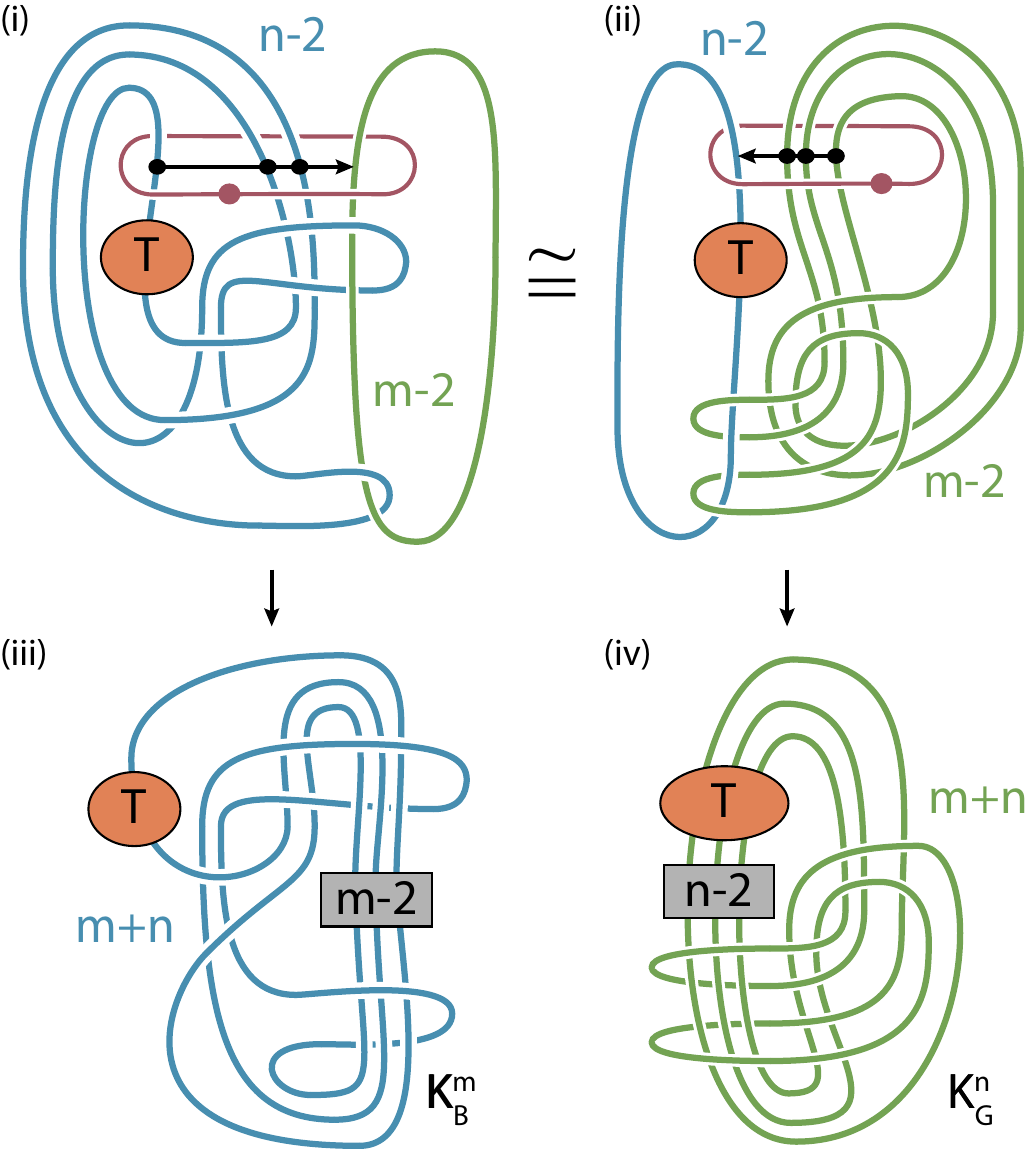}
\caption{$K_B^m$ and $K_G^n$ share the same $(n+m)$-trace.} 
\label{fig:BMfig7-localknotRGB}
\end{figure}

\begin{proof}[Proof of Theorem~\ref{thm:concordance}]
If $J$ is trivial, we replace $J$ by a non-trivial ribbon knot, which is necessarily ribbon concordant to the unknot.
Let $K$ be a knot with the property from Theorem~\ref{thm:non-prime}, for example we can take $K=9_{42}$.
We define knots $K'=(- \overline{K})\# J$ and $J'= K \#(- \overline{K})\# J$, where $- \overline{K}$ is the reflect inverse of the knot $K$.
Since $K\#(- \overline{K})$ is a ribbon knot, $J'$ is ribbon concordant to $J$.
By Theorem~\ref{thm:non-prime}, $J'=K\#K'$ is not detected by any of its traces. Moreover, Theorem~\ref{thm:non-prime} gives us infinitely many distinct knots $K$ with the above property and thus we get also infinitely many distinct knots $J'$ not detected by any of its traces, that are all concordant to $J$.  
\end{proof}

\subsection*{Individual grant support}

KLB was partially supported by the Simons Foundation gift \#962034.

MK is supported by the DFG, German Research Foundation, (Proj: 561898308); by a Ram\'on y Cajal grant (RYC2023-043251-I) and the project PID2024-157173NB-I00 funded by MCIN/AEI/10.13039/501100011033, by ESF+, and by FEDER, EU; and by a VII Plan Propio de Investigaci\'on y Transferencia (SOL2025-36103) of the University of Sevilla.

KM has been partially supported by JSPS KAKENHI Grant Number 25K07018, 21H04428, 23K03110, 23K20791 and Joint Research Grant of Institute of Natural Sciences at Nihon University for 2025.

\subsection*{Acknowledgments}

We thank John Baldwin and Steven Sivek for useful discussion and their interest in this article.
Also we thank the referee for careful reading and useful suggestions.

 \let\MRhref\undefined
 \bibliographystyle{hamsalpha}
 \bibliography{lit.bib}

\end{document}